\documentclass[12pt]{article}
\usepackage{amsmath,amssymb,amsthm}
\usepackage{hyperref}
\usepackage{mathrsfs}

\newcommand{\R}{\mathbb{R}}
\newcommand{\T}{\mathbb{T}}
\newcommand{\Z}{\mathbb{Z}}
\newcommand{\dd}{\,\mathrm{d}}

\newtheorem{theorem}{Theorem}[section]

\newtheorem{corollary}[theorem]{Corollary}

\newtheorem{definition}[theorem]{Definition}

\newtheorem{lemma}[theorem]{Lemma}

\theoremstyle{remark}
\newtheorem{remark}[theorem]{Remark}

\theoremstyle{remark}

\begin{document}

\author{Alexandru F. Radu}
\title{Uniqueness in Lorentz Spaces of the 2d Navier-Stokes Equations}
\maketitle

\begin{abstract}
We study uniqueness of mild solutions to the two--dimensional incompressible Navier--Stokes equations on the torus in borderline spatial classes. While Lorentz--space methods yield uniqueness in $C([0,T);L^{2,1}(\T^2))$ via real interpolation and weak $L^2$ control, extending such arguments to larger Lorentz spaces $L^{2,q}$, $1<q<2$, encounters endpoint obstructions. In this paper we prove that uniqueness in $C([0,T);L^{2,q}(\T^2))$ holds provided one assumes a short--time $L^\infty$ smoothing property at every restart time, namely
\[
\lim_{\delta\downarrow 0}\sup_{t\in(T_0,T_0+\delta]}\sqrt{t-T_0}\,\|v(t)\|_{L^\infty(\T^2)}=0
\quad \text{for all } T_0\in[0,T).
\]
The proof combines the restart mild formulation, the $L^1$ bound for the periodic Oseen kernel of $e^{t\Delta}\mathbb{P}\nabla\cdot$, and an explicit Beta--function computation yielding a strict $L^2$ contraction on short intervals. The smoothing assumption is natural in Kato and Koch--Tataru type critical well--posedness frameworks and clarifies how parabolic regularization can replace Lorentz endpoint structure in uniqueness arguments.
\end{abstract}

\section{Introduction}

The incompressible Navier--Stokes equations on the two--dimensional torus $\T^2$ read
\begin{equation}\label{eq:NS}
\left\{
\begin{aligned}
\partial_t v + v\cdot\nabla v - \Delta v + \nabla p &= 0, \\
\nabla\cdot v &= 0,\\
v|_{t=0} &= v_0,
\end{aligned}
\right.
\qquad (t,x)\in (0,T)\times \T^2,
\end{equation}
where $v(t,x)\in\R^2$ is the velocity and $p(t,x)\in\R$ is the pressure. The question of
\emph{uniqueness} of solutions in low regularity spaces is a central theme of the modern
analysis of \eqref{eq:NS}, and is closely intertwined with the program of understanding
criticality, scaling, and the borderline mapping properties of the bilinear term
$B(v,v)=\mathbb{P}\nabla\cdot(v\otimes v)$ (where $\mathbb{P}$ denotes the Leray projector).

Even in two dimensions, where Leray--Hopf weak solutions are globally well--posed under
the classical energy framework, the uniqueness problem becomes delicate when one
works in \emph{critical} spaces and only assumes mild or distributional formulations,
without a priori energy inequalities or additional spacetime integrability. In particular,
a recurring difficulty is that, at the borderline scale, the product $v\otimes w$ may fail
to be well defined (or well controlled) in the function spaces naturally associated to the
parabolic kernel bounds. Lemari\'e--Rieusset has emphasized in several works that uniqueness
in borderline spaces hinges on a subtle interplay between real interpolation, kernel estimates,
and comparison principles; see the monograph \cite{LemarieBook2002} and the more recent
book \cite{LemarieBook2023} for broad perspective, as well as the critical-space uniqueness
paper \cite{LemarieChemin2007} and the Morrey--Campanato viewpoint \cite{LemarieMorrey2007}.

\medskip

\noindent\textbf{Uniqueness at the $L^2$-scale and the Lorentz endpoint.}
A convenient way to encode borderline behavior is to replace Lebesgue spaces by Lorentz spaces.
On $\T^2$, the scaling of \eqref{eq:NS} formally leaves invariant the $L^2$ norm of the velocity.
In that sense, the class $C([0,T);L^2(\T^2))$ is ``critical'' for two--dimensional data.
However, the fixed point / mild solution approach typically requires mapping properties of the
bilinear operator $B(\cdot,\cdot)$ that are \emph{not} available at this endpoint without additional structure.
A refined substitute is the Lorentz space $L^{2,1}$, which is strictly smaller than $L^2$ and enjoys
better product estimates. In particular, a key mechanism in Lemari\'e--Rieusset's uniqueness theory
is the Lorentz estimate
\begin{equation}\label{eq:LR-idea}
\|wz\|_{L^1}\ \lesssim\ \|w\|_{L^{2,1}}\|z\|_{L^{2,\infty}},
\end{equation}
together with the real interpolation identity $L^{2,\infty}=[L^1,L^\infty]_{1/2,\infty}$ and
the corresponding $K$--functional control.
In the two--dimensional periodic setting, these ideas yield uniqueness for mild solutions in
$C([0,T);L^{2,1}(\T^2))$ (and hence in $C([0,T);L^p(\T^2))$ for every $p>2$), an observation that
has been reiterated in various forms in the literature and in expository notes; see, for instance,
\cite{LemarieBook2002,HeNote2024}.

The paper \cite{LemarieRieusset2025HighlySingular} (whose Proposition~1 is one of our starting points) provides a clean and
self-contained implementation of this approach on $\T^2$, relying on periodic kernel bounds for the
Oseen-type operator $\mathrm{e}^{t\Delta}\mathbb{P}\nabla\cdot$ and a decomposition of the difference
of two solutions into pieces controlled in $L^1$ and $L^\infty$, then recombined through Lorentz
interpolation in $L^{2,\infty}$. This is strongly aligned with Lemari\'e--Rieusset's broader program
of exploiting harmonic analysis and real interpolation in critical Navier--Stokes theory
\cite{LemarieBook2002,LemarieMorrey2007}.

\medskip

\noindent\textbf{Motivation for the present work.}
It is natural to ask whether one can extend the uniqueness statement from $L^{2,1}$
to larger Lorentz classes $L^{2,q}$, $1<q<2$ (still strictly contained in $L^2$), or even to
$L^2$ itself. The Lorentz--based strategy behind \eqref{eq:LR-idea} is, however, intrinsically tied
to the endpoint interpolation space $L^{2,\infty}$ and the exponent $\infty$ in the real interpolation
norm; attempting to replace $L^{2,\infty}$ by $L^2=[L^1,L^\infty]_{1/2,2}$ forces an
\emph{integrated} $K$--functional, and the balancing that succeeds for $r=\infty$ generally
produces logarithmic losses for $r=2$. This phenomenon is consistent with the long--standing
attention paid to borderline uniqueness at the $L^2$ scale. In particular, Lemari\'e--Rieusset
has discussed the case $C([0,T];L^2(\T^2))$ as a subtle endpoint problem for mild formulations
in which one does not assume additional spacetime integrability or energy inequalities, and where
the bilinear form may fail to be controlled by naive estimates (see, e.g., \cite{LemarieBook2023} for
context and related discussion).

In parallel, there is a complementary and highly influential viewpoint: rather than working
only with spatial norms at fixed times, one builds solution spaces that encode \emph{parabolic smoothing}.
This includes Kato's strong $L^p$ theory \cite{Kato1984} and, most notably, the critical well-posedness
theorem of Koch and Tataru \cite{KochTataru2001}. Koch--Tataru introduced the space $BMO^{-1}$
for initial data and constructed mild solutions via a fixed point argument in a function space that,
among other components, controls the time-weighted $L^\infty$ norm:
\begin{equation}\label{eq:KT-smoothing}
\sup_{t>0}\ \sqrt{t}\,\|v(t)\|_{L^\infty(\R^d)} < \infty,
\end{equation}
together with additional Carleson/tent-space type seminorms tailored to the heat flow
\cite{KochTataru2001}. Such bounds are natural from the semigroup estimate
$\|e^{t\Delta}f\|_\infty\lesssim t^{-1/2}\|f\|_2$ (in two dimensions) and, in well-posedness classes,
are stable under time translation: restarting the equation at time $T_0$ yields the same estimate
with $t$ replaced by $t-T_0$. Lemari\'e--Rieusset and collaborators have repeatedly explored the
relationship between ``Kato mild'' and ``Leray-type'' solutions in critical spaces (including $BMO^{-1}$),
and how uniqueness principles propagate between them; see, for example, \cite{LemariePrioux2009}
and the survey-style developments in \cite{LemarieBook2002,LemarieBook2023}.

\medskip

\noindent\textbf{Main result and its interpretation.}
The purpose of this paper is to show that one can obtain a clean extension of the
$L^{2,1}$ uniqueness statement to the larger class $L^{2,q}$, $1<q<2$, provided one assumes
a short-time $L^\infty$ smoothing property that is standard in Koch--Tataru/Kato-type frameworks.
More precisely, we assume that for every isnstant $T_0\in[0,T)$ the solution satisfies the
\emph{vanishing} time-weighted bound
\begin{equation}\label{eq:S-intro}
\lim_{\delta\downarrow 0}\ \sup_{t\in (T_0,T_0+\delta]}
\sqrt{t-T_0}\,\|v(t)\|_{L^\infty(\T^2)} = 0.
\end{equation}
Condition \eqref{eq:S-intro} is a natural strengthening of \eqref{eq:KT-smoothing}: it demands not only
critical boundedness of $\sqrt{t-T_0}\|v(t)\|_\infty$, but also its smallness on short intervals after any
restart time. This ``short-time smallness'' is exactly what allows one to close a contraction
argument in $L^2$ without resorting to the endpoint real interpolation machinery, and it is
compatible with time-translation invariant well-posedness classes (including Koch--Tataru).

The main result should be viewed as a ''trade'' between spatial integrability and parabolic smoothing:
Lemari\'e--Rieusset's $L^{2,1}$ theory achieves uniqueness by sharpening the spatial endpoint through
Lorentz structure; our result allows the larger space $L^{2,q}$ by imposing a (very natural) short-time
$L^\infty$ smoothing condition that is built into many fixed-point constructions.

\medskip

\noindent\textbf{Relation to Lemari\'e--Rieusset's program.}
The proof is inspired by Lemari\'e--Rieusset's restart and comparison philosophy
\cite{LemarieBook2002,LemarieChemin2007,LemarieMorrey2007}, and by the periodic kernel framework
developed in \cite{LemarieRieusset2025HighlySingular}. In Proposition~1 of \cite{LemarieRieusset2025HighlySingular} (modeled on Lemari\'e--Rieusset's
classical approach), the difference $w=v_1-v_2$ is decomposed into terms estimated in $L^1$ and $L^\infty$,
with the borderline recombination performed in $L^{2,\infty}$ via real interpolation. Our approach retains
the crucial restart identity and periodic kernel bounds, but changes the closure mechanism:
we use only the $L^1$ kernel estimate
$\|K_{\mathrm{per}}(t)\|_{L^1}\lesssim t^{-1/2}$ and the product bound
$\|v\otimes w\|_{L^2}\le \|v\|_{L^\infty}\|w\|_{L^2}$.
The assumption \eqref{eq:S-intro} supplies the integrable weight
$\|v(t)\|_{L^\infty}\lesssim (t-T_0)^{-1/2}\times(\text{small})$ after a restart at $T_0$.
As a consequence, the time singularity becomes exactly
\[
\int_{T_0}^{t} (t-s)^{-1/2}(s-T_0)^{-1/2}\,\dd s,
\]
which is finite and equal to $\pi$ (a Beta function computation). This yields a strict contraction on
a short interval $(T_0,T_0+\delta]$, and a standard maximal-time continuation argument then forces
$w\equiv 0$ globally.

\medskip

\noindent\textbf{Outline of the proof and organization of the paper.}
Section~\ref{sec:prelim} recalls the periodic mild formulation and kernel estimates for the operator
$\mathrm{e}^{t\Delta}\mathbb{P}\nabla\cdot$, following \cite{LemarieRieusset2025HighlySingular} and the standard Oseen kernel analysis.
We also record the elementary Lorentz embedding $L^{2,q}\hookrightarrow L^2$ for $q\le 2$, which reduces
the uniqueness problem to an $L^2$ contraction argument.
Section~\ref{sec:main} contains the proof of the main theorem.
In the final section we discuss how \eqref{eq:S-intro} fits into existing critical well-posedness theories
and compare the mechanism here with the Lorentz interpolation strategy behind Proposition~1 of \cite{LemarieRieusset2025HighlySingular}
and with the critical $BMO^{-1}$ framework of Koch--Tataru \cite{KochTataru2001}. We also comment on possible
extensions to other settings (e.g.\ whole space, forced equations, or Morrey-type solutions), in the spirit
of Lemari\'e--Rieusset's developments \cite{LemarieMorrey2007,LemariePrioux2009,LemarieBook2023}.

\medskip

\noindent\textbf{Acknowledgements.} A.R. was partially supported by a grant of the Ministry of Research, Innovation and Digitization, CCCDI - UEFISCDI, project number ROSUA-2024-0001, within PNCDI IV.  The author wishes to thank Liviu Ignat for his insightful advice for the preparation of this paper.

\bigskip

\section{Preliminaries}\label{sec:prelim}

\subsection{Notation and basic functional-analytic setup}

We write $\T^2:=\R^2/\Z^2$ for the two--dimensional flat torus. All function spaces below
are over $\T^2$ unless otherwise stated. For $1\le p\le\infty$, $\|\cdot\|_{p}$ denotes the
$L^p(\T^2)$ norm. The symbol $\lesssim$ means an inequality up to a universal constant
that may depend on fixed parameters such as the dimension (here $2$), but not on the time variables
under consideration.

We denote by $\mathbb P$ the Leray projector onto divergence--free vector fields on $\T^2$,
defined Fourier--multiplicatively by
\[
\widehat{\mathbb P f}(k) = \left(I - \frac{k\otimes k}{|k|^2}\right)\widehat f(k),
\qquad k\in\Z^2\setminus\{0\},
\]
and $\widehat{\mathbb P f}(0)=\widehat f(0)$. We will not need to impose any normalization on the mean;
the projector is bounded on $L^p$ for $1<p<\infty$ and acts as a Calder\'on--Zygmund operator.

The Navier--Stokes equations on $\T^2$ are
\begin{equation}\label{eq:NS-torus}
\left\{
\begin{aligned}
\partial_t v - \Delta v + \mathbb P\nabla\cdot (v\otimes v) &= 0,\\
\nabla\cdot v &= 0,\\
v|_{t=0} &= v_0.
\end{aligned}
\right.
\end{equation}
Here $v=v(t,x)\in\R^2$ is the velocity and $v\otimes v$ is the $2\times 2$ matrix with entries
$(v\otimes v)_{ij}=v_i v_j$.

\subsection{Periodic mild formulation and time restart}

Let $(e^{t\Delta})_{t\ge 0}$ denote the heat semigroup on $\T^2$. We recall that
$e^{t\Delta}$ is a contraction on all $L^p$ spaces, and it enjoys the standard smoothing estimate
\begin{equation}\label{eq:heat-L2-Linfty}
\|e^{t\Delta}f\|_{\infty}\ \lesssim\ t^{-1/2}\|f\|_{2},\qquad 0<t\le 1,
\end{equation}
as well as the analogous $L^p\to L^r$ estimates.

We say that $v$ is a \emph{mild solution} of \eqref{eq:NS-torus} on $[0,T)$ if $v\in C([0,T);L^2(\T^2))$
is divergence free for each $t$ (in the distributional sense) and satisfies the integral equation
\begin{equation}\label{eq:mild-0}
v(t)=e^{t\Delta}v_0 - \int_0^t e^{(t-s)\Delta}\,\mathbb P\nabla\cdot\bigl(v(s)\otimes v(s)\bigr)\,\dd s,
\qquad 0\le t<T.
\end{equation}
Equivalently, one may rewrite the Duhamel term as a spatial convolution with a time--dependent kernel
$K_{\mathrm{per}}(t,\cdot)$, the periodic Oseen-type kernel associated with the operator
$e^{t\Delta}\mathbb P\nabla\cdot$. Namely, for each fixed $t>0$, $K_{\mathrm{per}}(t,\cdot)$ is a
$2\times 2\times 2$ tensor-valued kernel such that
\begin{equation}\label{eq:kernel-form}
e^{t\Delta}\mathbb P\nabla\cdot F = K_{\mathrm{per}}(t,\cdot) * F
\qquad \text{for all suitable matrix fields }F,
\end{equation}
where $*$ denotes convolution on $\T^2$ and the contraction in indices is the natural one
(so the output is a vector field).
We will only use norm bounds on $K_{\mathrm{per}}$ and do not need an explicit formula.

\medskip

\noindent\textbf{Time restart.}
A key feature of \eqref{eq:mild-0} is that it can be restarted at any intermediate time.
Fix $T_0\in[0,T)$ and set $v(T_0)\in L^2(\T^2)$.
Using the semigroup property $e^{t\Delta}=e^{(t-T_0)\Delta}e^{T_0\Delta}$ and splitting the Duhamel integral,
one checks directly that \eqref{eq:mild-0} is equivalent to the following \emph{restart formulation}:
for all $t\in[T_0,T)$,
\begin{equation}\label{eq:mild-restart-prelim}
v(t)=e^{(t-T_0)\Delta}v(T_0) - \int_{T_0}^t e^{(t-s)\Delta}\,\mathbb P\nabla\cdot\bigl(v(s)\otimes v(s)\bigr)\,\dd s.
\end{equation}
Equivalently, in kernel form,
\begin{equation}\label{eq:mild-restart-kernel}
v(t)=e^{(t-T_0)\Delta}v(T_0) - \int_{T_0}^t K_{\mathrm{per}}(t-s,\cdot) * \bigl(v(s)\otimes v(s)\bigr)\,\dd s.
\end{equation}
Identity \eqref{eq:mild-restart-prelim}--\eqref{eq:mild-restart-kernel} is the starting point of
the maximal-time coincidence argument used in Lemari\'e--Rieusset's approach and in \cite{LemarieRieusset2025HighlySingular}.

\subsection{Kernel bounds for $e^{t\Delta}\mathbb P\nabla\cdot$ on $\T^2$}\label{subsec:kernel}

We record the periodic kernel estimates that we will use throughout. The following bounds are
standard and appear (in essentially this form) in \cite{LemarieRieusset2025HighlySingular}; they may be proved by starting
from the corresponding whole-space Oseen kernel on $\R^2$ and then periodizing, using the decay of the
whole-space kernel away from the origin.

\begin{lemma}[Periodic kernel bounds]\label{lem:Kper-bounds}
Let $K_{\mathrm{per}}(t,\cdot)$ be the periodic kernel for the operator $e^{t\Delta}\mathbb P\nabla\cdot$
on $\T^2$ as in \eqref{eq:kernel-form}. Then there exists a constant $C>0$ such that for all $t>0$,
\begin{equation}\label{eq:Kper-L1}
\|K_{\mathrm{per}}(t,\cdot)\|_{L^1(\T^2)}\le C\,t^{-1/2},
\end{equation}
and
\begin{equation}\label{eq:Kper-Linfty}
\|K_{\mathrm{per}}(t,\cdot)\|_{L^\infty(\T^2)}\le C\,(1+t^{-3/2}).
\end{equation}
\end{lemma}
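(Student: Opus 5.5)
The plan is to obtain $K_{\mathrm{per}}$ by periodizing the whole-space kernel and reading both bounds off the scaling and spatial decay of that kernel. Let $K(t,\cdot)$ be the kernel of $e^{t\Delta}\mathbb P\nabla\cdot$ on $\R^2$. Its symbol is $e^{-t|\xi|^2}\bigl(I-\frac{\xi\otimes\xi}{|\xi|^2}\bigr)i\xi$, which is homogeneous under parabolic scaling. Hence
\[
K(t,x)=t^{-3/2}\Phi\bigl(x/\sqrt t\bigr),\qquad \Phi:=K(1,\cdot).
\]
The key analytic input, and the step I expect to be the main obstacle, is the pointwise decay
\begin{equation*}
|\Phi(y)|\lesssim (1+|y|)^{-3},\qquad y\in\R^2 ,
\end{equation*}
together with smoothness of $\Phi$. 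To prove it, I would write $\mathbb P=I+\nabla\nabla(-\Delta)^{-1}$ and split $\Phi$ accordingly.

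The identity part gives $\nabla g_1$, where $g_s$ is the heat kernel at time $s$; this is a Schwartz function. For the Riesz part, use $(-\Delta)^{-1}e^{\Delta}=\int_1^\infty e^{s\Delta}\dd s$. This is legitimate on the range of $\nabla\nabla$, since the symbol there is bounded at $\xi=0$. The corresponding kernel is therefore $\int_1^\infty \partial^3 g_s(y)\dd s$, a sum of third derivatives. Using
\[
|\partial^3 g_s(y)|\lesssim s^{-5/2}\bigl(1+|y|/\sqrt s\bigr)^{-N},
\]
I would split the $s$-integral at $s=|y|^2$. The range $s\ge |y|^2$ contributes $\int_{|y|^2}^\infty s^{-5/2}\dd s\sim |y|^{-3}$. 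The range $s\le|y|^2$ contributes $|y|^{-N}\int_1^{|y|^2}s^{N/2-5/2}\dd s\lesssim |y|^{-3}$ once $N>3$. For $|y|\le 1$ the integral is bounded outright, which gives the claim. In particular $\Phi\in L^1\cap L^\infty(\R^2)$.

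Next I would define $K_{\mathrm{per}}(t,x):=\sum_{k\in\Z^2}K(t,x+k)$. The $|y|^{-3}$ decay makes this series converge absolutely and locally uniformly for each fixed $t>0$. By Poisson summation, its Fourier coefficients are the whole-space symbol evaluated at $k\in\Z^2$. At $k=0$ this value is $0$ because of the factor $i\xi$, which is consistent with $\nabla\cdot$ annihilating constants. So $K_{\mathrm{per}}$ is exactly the kernel in \eqref{eq:kernel-form}.

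The $L^1$ bound \eqref{eq:Kper-L1} then follows with no case distinction in $t$. Integrating the periodization over a fundamental cell gives
\[
\|K_{\mathrm{per}}(t,\cdot)\|_{L^1(\T^2)}\le \|K(t,\cdot)\|_{L^1(\R^2)}=t^{-1/2}\|\Phi\|_{L^1(\R^2)}.
\]
For \eqref{eq:Kper-Linfty}, take $x\in[-\tfrac12,\tfrac12]^2$. The $k=0$ term is bounded by $t^{-3/2}\|\Phi\|_\infty$. For $k\neq0$ we have $|x+k|\ge |k|/2$, so
\[
|K(t,x+k)|\lesssim t^{-3/2}\Bigl(1+\frac{|k|}{2\sqrt t}\Bigr)^{-3}\le t^{-3/2}\Bigl(\frac{2\sqrt t}{|k|}\Bigr)^{3}\lesssim |k|^{-3},
\]
and $\sum_{k\neq0}|k|^{-3}<\infty$ in two dimensions. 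Adding these contributions gives $\|K_{\mathrm{per}}(t,\cdot)\|_{L^\infty}\le C(1+t^{-3/2})$ for all $t>0$. This completes the plan, modulo the decay estimate for $\Phi$, which is the only genuinely technical point.
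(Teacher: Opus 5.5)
Your proposal is correct and follows the same route as the paper's sketch: you periodize the whole-space Oseen kernel using $|K(t,x)|\lesssim(\sqrt t+|x|)^{-3}$ (your bound $|\Phi(y)|\lesssim(1+|y|)^{-3}$ after scaling), with the $k=0$ term carrying the singular behaviour and the $k\neq0$ terms giving a uniformly bounded tail. Beyond the paper, you prove the pointwise decay that the paper only asserts, and your unfolding inequality $\|K_{\mathrm{per}}(t,\cdot)\|_{L^1(\T^2)}\le\|K(t,\cdot)\|_{L^1(\R^2)}=t^{-1/2}\|\Phi\|_{L^1(\R^2)}$ gives the $L^1$ bound for all $t>0$ directly, whereas the paper's bounded-tail remark alone would only give $C(1+t^{-1/2})$.
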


\begin{proof}[Proof sketch]
The proof is classical; we briefly indicate the mechanism and refer to \cite{LemarieRieusset2025HighlySingular} for a detailed
presentation in the periodic setting.

On $\R^2$, the kernel of $e^{t\Delta}\mathbb P\nabla\cdot$ is the (tensor-valued) Oseen kernel
$K(t,x)$, which satisfies the pointwise bound (for $t>0$)
\begin{equation}\label{eq:Oseen-pointwise}
|K(t,x)|\ \lesssim\ \frac{1}{(\sqrt{t}+|x|)^{3}},
\end{equation}
reflecting one spatial derivative of the heat kernel combined with a Calder\'on--Zygmund projection.
From \eqref{eq:Oseen-pointwise} one obtains
\[
\|K(t,\cdot)\|_{L^1(\R^2)}\lesssim t^{-1/2},
\qquad
\|K(t,\cdot)\|_{L^\infty(\R^2)}\lesssim t^{-3/2}.
\]
The periodic kernel is obtained by periodization
\[K_{\mathrm{per}}(t,x)=\sum_{n\in\Z^2}K(t,x+n)\] (in the sense of distributions).
Using \eqref{eq:Oseen-pointwise}, the $n=0$ term yields the same singular behavior as $t\downarrow 0$,
while the terms $n\neq 0$ contribute uniformly bounded tails because $|x+n|\gtrsim 1$ for $x\in\T^2$.
This yields \eqref{eq:Kper-L1} and \eqref{eq:Kper-Linfty} with the extra harmless ``$+1$'' in
\eqref{eq:Kper-Linfty} coming from the $n\neq 0$ tail. For a detailed periodic derivation and constants,
see \cite{LemarieRieusset2025HighlySingular}, Proposition~1 and the kernel bounds cited therein.
\end{proof}

\medskip

We emphasize that our main uniqueness proof uses only \eqref{eq:Kper-L1} and the Young inequality
\begin{equation}\label{eq:Young-L1-L2}
\|K_{\mathrm{per}}(t-s,\cdot)*F\|_{L^2}\ \le\ \|K_{\mathrm{per}}(t-s,\cdot)\|_{L^1}\,\|F\|_{L^2}.
\end{equation}
The stronger $L^\infty$ bound \eqref{eq:Kper-Linfty} is essential in the Lorentz endpoint arguments
of Lemari\'e--Rieusset and \cite{LemarieRieusset2025HighlySingular}, but will not enter our $L^2$ contraction closure.

\subsection{Lorentz spaces and the embedding $L^{2,q}\hookrightarrow L^2$}\label{subsec:lorentz}

We briefly recall the Lorentz spaces $L^{p,r}(\T^2)$ and record the embedding needed to reduce our
uniqueness statement to an $L^2$ argument.

\medskip

\noindent\textbf{Definition.}
For a measurable function $f$ on $\T^2$, let $f^\ast$ denote its decreasing rearrangement. For
$1\le p<\infty$ and $1\le r\le\infty$, the Lorentz norm is defined by
\[
\|f\|_{L^{p,r}}
:=
\begin{cases}
\displaystyle
\left(\int_0^{|\T^2|}\bigl[t^{1/p}f^\ast(t)\bigr]^r\,\frac{\dd t}{t}\right)^{1/r}, & 1\le r<\infty,\\[2.0ex]
\displaystyle
\sup_{0<t<|\T^2|}\ t^{1/p}f^\ast(t), & r=\infty.
\end{cases}
\]
We recall the monotonicity in the second index: for fixed $p$, if $1\le r_1\le r_2\le\infty$, then
\begin{equation}\label{eq:lorentz-monotone}
L^{p,r_1}(\T^2)\hookrightarrow L^{p,r_2}(\T^2)
\qquad\text{and}\qquad
\|f\|_{L^{p,r_2}}\le C\,\|f\|_{L^{p,r_1}}.
\end{equation}
In particular, $L^{p,p}=L^p$.

\begin{lemma}[Embedding $L^{2,q}\hookrightarrow L^2$ for $q\le 2$]\label{lem:L2q-into-L2}
Let $1\le q\le 2$. Then
\[
L^{2,q}(\T^2)\hookrightarrow L^2(\T^2)
\]
continuously. In particular, if $v\in C([0,T),L^{2,q})$ with $q\le 2$, then $v\in C([0,T),L^2)$.
\end{lemma}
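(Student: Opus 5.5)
The embedding follows from the monotonicity of Lorentz norms in the second index, \eqref{eq:lorentz-monotone}, together with the identification $L^{2,2}=L^2$. I will make both ingredients explicit, since the argument is short and the constant can be tracked.

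First I verify $L^{2,2}=L^2$ with equal norms. With $p=r=2$, the Lorentz norm is
\[
\|f\|_{L^{2,2}}^2=\int_0^{|\T^2|}\bigl[t^{1/2}f^\ast(t)\bigr]^2\,\frac{\dd t}{t}=\int_0^{|\T^2|}f^\ast(t)^2\,\dd t .
\]
Since $f$ and $f^\ast$ are equimeasurable, the right-hand side equals $\|f\|_2^2$.

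Second, I prove $\|f\|_{L^{2,2}}\le C\|f\|_{L^{2,q}}$ for $1\le q\le 2$. I will use the standard two-step argument.

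\emph{Step 1: weak-type bound.} Because $f^\ast$ is nonincreasing, for each $s$ we have
\[
s^{1/2}f^\ast(s)\le c_q\Bigl(\int_0^s\bigl[t^{1/2}f^\ast(t)\bigr]^q\,\frac{\dd t}{t}\Bigr)^{1/q},
\]
obtained by bounding $f^\ast(t)\ge f^\ast(s)$ for $t\le s$ and computing $\int_0^s t^{q/2-1}\,\dd t$. This gives $\|f\|_{L^{2,\infty}}\le c_q\|f\|_{L^{2,q}}$.

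\emph{Step 2: interpolation in the exponent.} I write
\[
\bigl[t^{1/2}f^\ast\bigr]^2=\bigl[t^{1/2}f^\ast\bigr]^{2-q}\,\bigl[t^{1/2}f^\ast\bigr]^{q},
\]
and bound the first factor by $\|f\|_{L^{2,\infty}}^{2-q}$. This yields
\[
\|f\|_{L^{2,2}}^2\le \|f\|_{L^{2,\infty}}^{2-q}\,\|f\|_{L^{2,q}}^q\le C\|f\|_{L^{2,q}}^2 .
\]
For $q=2$ there is nothing to prove. Linearity of the inclusion map then gives continuity.

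\textbf{Consequence for continuous curves.} If $v\in C([0,T);L^{2,q})$, then
\[
\|v(t)-v(s)\|_2\le C\|v(t)-v(s)\|_{L^{2,q}}\to0 \quad\text{as } s\to t,
\]
so $v\in C([0,T);L^2)$.

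The only point needing care is that $\|\cdot\|_{L^{2,q}}$ as defined above is merely a quasi-norm. This is harmless here: the inclusion is linear, and the estimate is applied directly to the difference $v(t)-v(s)$, so no triangle inequality in $L^{2,q}$ is ever used. I expect no genuine obstacle in this proof.
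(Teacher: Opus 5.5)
Your proof is correct and follows the same route as the paper: identify $L^{2,2}=L^2$, use monotonicity of Lorentz norms in the second index, and transfer continuity by applying the linear embedding to $v(t)-v(s)$. The only difference is that you prove the monotonicity (weak-type bound plus the splitting $2=(2-q)+q$, which gives a constant $\le 1$) where the paper simply cites \eqref{eq:lorentz-monotone}; incidentally, for $1\le q\le 2$ the functional $\|\cdot\|_{L^{2,q}}$ is actually a norm rather than merely a quasi-norm, though, as you note, this plays no role.
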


\begin{proof}
Since $L^{2,2}=L^2$, the claim follows immediately from \eqref{eq:lorentz-monotone} with $p=2$,
$r_1=q$, $r_2=2$:
\[
\|f\|_{L^2}=\|f\|_{L^{2,2}}\le C\,\|f\|_{L^{2,q}}.
\]
Continuity in time is preserved by continuous embeddings: if $t\mapsto v(t)$ is continuous in
$L^{2,q}$ and the embedding into $L^2$ is continuous, then $t\mapsto v(t)$ is continuous in $L^2$.
\end{proof}

\subsection{A basic $L^2$ product inequality used in the main proof}

We record the elementary product estimate that will be used to place the Navier--Stokes nonlinearity
in $L^2$ once an $L^\infty$ bound is available.

\begin{lemma}\label{lem:Linfty-L2}
For $f\in L^\infty(\T^2)$ and $g\in L^2(\T^2)$, one has $fg\in L^2(\T^2)$ and
\[
\|fg\|_{L^2}\le \|f\|_{L^\infty}\|g\|_{L^2}.
\]
Consequently, for vector fields $a,b$,
\[
\|a\otimes b\|_{L^2}\le \|a\|_{L^\infty}\|b\|_{L^2}.
\]
\end{lemma}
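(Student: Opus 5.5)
The estimate is pointwise in $x$, so I would first prove the scalar bound and then obtain the tensor version by applying it to each entry. For the scalar inequality, note that for almost every $x\in\T^2$ one has $|f(x)|\le\|f\|_{L^\infty}$; the exceptional set is null and so does not affect any integral. Multiplying by $|g(x)|$ and squaring gives
\[
|f(x)g(x)|^2\le \|f\|_{L^\infty}^2\,|g(x)|^2 \qquad\text{for a.e. } x\in\T^2 .
\]
Measurability of $fg$ is immediate, since it is a product of measurable functions. Integrating over $\T^2$ gives $\|fg\|_{L^2}^2\le\|f\|_{L^\infty}^2\|g\|_{L^2}^2<\infty$. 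This shows $fg\in L^2(\T^2)$, and taking square roots yields the stated inequality.

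For the tensor statement, I would fix a pointwise norm convention: Euclidean norms on $\R^2$ for vectors, and the Frobenius (Hilbert--Schmidt) norm for $2\times 2$ matrices. With these norms the $L^p$ norm of a field is the $L^p$ norm of its pointwise modulus. The key identity is then
\[
|a(x)\otimes b(x)|^2=\sum_{i,j}a_i(x)^2 b_j(x)^2=|a(x)|^2|b(x)|^2 .
\]
Since $|a(x)|\le\|a\|_{L^\infty}$ almost everywhere, I would apply exactly the scalar argument above with $f=|a|$ and $g=|b|$ to obtain $\|a\otimes b\|_{L^2}\le\|a\|_{L^\infty}\|b\|_{L^2}$.

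There is no real obstacle here. The only point needing care is the choice of norms on vector and matrix fields. With the Frobenius norm the bound holds with constant exactly $1$. With any other choice, for example the maximum of the componentwise norms, the inequality would only hold up to a dimensional constant. That weaker form would still be harmless for the later contraction argument, where only the factor $\|v\|_{L^\infty}$ and the Beta-function integral matter.
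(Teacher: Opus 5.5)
Your proof is correct and is essentially the paper's argument: integrate the pointwise a.e.\ bound $|f(x)g(x)|^2\le\|f\|_{L^\infty}^2|g(x)|^2$, then pass to the tensor case. Your Frobenius identity $|a\otimes b|=|a|\,|b|$ is a slightly more careful version of the paper's ``follows componentwise'' step. It gives constant exactly $1$ with $\|a\|_{L^\infty}=\||a|\|_{L^\infty}$, whereas a literal entry-by-entry bound could lose a factor $\sqrt2$.
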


\begin{proof}
This is immediate from the definition of $L^\infty$ and Cauchy--Schwarz:
\[
\|fg\|_2^2=\int_{\T^2}|f|^2|g|^2\le \|f\|_\infty^2\int_{\T^2}|g|^2=\|f\|_\infty^2\|g\|_2^2.
\]
The tensor estimate follows componentwise.
\end{proof}

\medskip

The combination of Lemma~\ref{lem:Kper-bounds} (in particular \eqref{eq:Kper-L1}),
Lemma~\ref{lem:L2q-into-L2}, and Lemma~\ref{lem:Linfty-L2} will be the analytic backbone of the
$L^2$ contraction argument in Section~\ref{sec:main}. The role of the short-time smoothing assumption
\eqref{eq:S} is to ensure that $\|v(t)\|_{L^\infty}$ behaves like $(t-T_0)^{-1/2}$ with a \emph{small}
prefactor after any restart time $T_0$, which renders the Volterra kernel
$(t-s)^{-1/2}(s-T_0)^{-1/2}$ integrable and yields a strict contraction after a sufficiently short restart.

\section{Main Result}\label{sec:main}

\begin{theorem}\label{main}
Let $T\in(0,\infty]$ and let $1<q<2$. Let $v_1,v_2$ be mild solutions of the periodic Navier--Stokes equation on
$[0,T)\times\mathbb T^2$ with the same initial data
\[
v_1(0)=v_2(0)\in L^{2,q}(\mathbb T^2).
\]
Assume:
\begin{enumerate}
\item $v_j\in C([0,T),L^{2,q}(\mathbb T^2))$ for $j=1,2$.
\item (Short-time smoothing at every restart time) For every $T_0\in[0,T)$,
\begin{equation}\label{eq:S}
\lim_{\delta\downarrow 0}\ \sup_{t\in(T_0,T_0+\delta]}
\sqrt{t-T_0}\,\|v_j(t)\|_{L^\infty(\mathbb T^2)}=0,\qquad j=1,2.
\end{equation}
\end{enumerate}
Then $v_1\equiv v_2$ on $[0,T)$.
\end{theorem}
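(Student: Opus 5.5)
We will run a maximal-time coincidence argument on the difference $w:=v_1-v_2$, closing the estimates in $L^2$ instead of in a Lorentz interpolation space. By Lemma~\ref{lem:L2q-into-L2}, both $v_j$ lie in $C([0,T),L^2)$, so $w\in C([0,T),L^2)$ and $w(0)=0$. Let
\[
T^\ast:=\sup\{\tau\in[0,T): w\equiv 0 \text{ on } [0,\tau]\}.
\]
Since $w(0)=0$, this set contains $\tau=0$. By continuity of $w$ in $L^2$ we get $w(T^\ast)=0$ whenever $T^\ast<T$. We argue by contradiction, assuming $T^\ast<T$, and set $T_0:=T^\ast$.

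\textbf{Restarted equation for $w$.} We subtract the restart formulations \eqref{eq:mild-restart-kernel} for $v_1$ and $v_2$ at $T_0$. The linear terms cancel because $v_1(T_0)=v_2(T_0)$. Writing $v_1\otimes v_1-v_2\otimes v_2=w\otimes v_1+v_2\otimes w$, we obtain for $t\in[T_0,T)$
\[
w(t)=-\int_{T_0}^t K_{\mathrm{per}}(t-s,\cdot)*\bigl(w(s)\otimes v_1(s)+v_2(s)\otimes w(s)\bigr)\dd s .
\]
We then apply the Young inequality \eqref{eq:Young-L1-L2}, the kernel bound \eqref{eq:Kper-L1} and Lemma~\ref{lem:Linfty-L2}. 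This gives
\[
\|w(t)\|_2\le C\int_{T_0}^t (t-s)^{-1/2}\bigl(\|v_1(s)\|_\infty+\|v_2(s)\|_\infty\bigr)\|w(s)\|_2\dd s .
\]
This step also justifies that the Duhamel integrand is well defined in $L^2$ for $s>T_0$, even though the $v_j$ are only assumed in $L^{2,q}$.

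\textbf{Contraction on a short interval.} Fix $\varepsilon>0$ with $2C\pi\varepsilon<1$. By \eqref{eq:S} at the restart time $T_0$, there is $\delta>0$ with $T_0+\delta<T$ such that $\|v_j(s)\|_\infty\le \varepsilon (s-T_0)^{-1/2}$ for all $s\in(T_0,T_0+\delta]$ and $j=1,2$. Set $M:=\sup_{[T_0,T_0+\delta]}\|w\|_2$, which is finite by continuity on a compact interval. For $t\in(T_0,T_0+\delta]$ we get
\[
\|w(t)\|_2\le 2C\varepsilon M\int_{T_0}^t (t-s)^{-1/2}(s-T_0)^{-1/2}\dd s = 2C\pi\varepsilon\, M ,
\]
using the Beta integral $B(1/2,1/2)=\pi$, which is independent of $t-T_0$. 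Taking the supremum over $t$ gives $M\le 2C\pi\varepsilon M$, hence $M=0$. So $w\equiv0$ on $[0,T_0+\delta]$, contradicting the maximality of $T^\ast$. Therefore $T^\ast=T$ and $v_1\equiv v_2$ on $[0,T)$.

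\textbf{Expected obstacle.} The analysis is easy once everything is set up. The delicate point is rigor at the restart. The mild identity restarted at $T_0$ must hold for the difference, with the Bochner integral making sense in $L^2$. This needs $s\mapsto \|v_j(s)\|_\infty$ to be measurable and locally integrable against $(t-s)^{-1/2}$ on $(T_0,t]$. Near $T_0$ this follows from \eqref{eq:S}; away from $T_0$ it follows from \eqref{eq:S} applied at intermediate restart times together with a compactness covering argument. It is also essential that the constant $2C\pi\varepsilon$ does not depend on $\delta$; the scale invariance of the Beta integral is exactly what provides this.
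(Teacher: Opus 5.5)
Your proposal is correct and is essentially the paper's own proof: the same maximal coincidence time $T^*$ and restart at $T^*$ so that the linear terms cancel. It also uses Young's inequality with the $L^1$ kernel bound, the $L^\infty\times L^2$ product estimate, and the scale-invariant Beta integral $\mathrm{B}(1/2,1/2)=\pi$, which gives a $\delta$-independent contraction constant; your separate $\varepsilon$-bounds for $v_1,v_2$ in place of the paper's combined $M(\delta)$ are only cosmetic.

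The covering argument in your ``obstacle'' paragraph is not needed, and it is also not justified, since \eqref{eq:S} at $T_1$ only controls $\|v_j\|_\infty$ on a right-neighbourhood of $T_1$ of uncontrolled length. For $t\in[T_0,T_0+\delta]$ the Duhamel integral only involves $s\in(T_0,t]$, where \eqref{eq:S} at $T_0$ alone gives the required bound.
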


\begin{proof}
\textbf{Step 1.}
Since $1<q<2$, the Lorentz embedding monotonicity in the second index yields
\[
L^{2,q}(\mathbb T^2)\hookrightarrow L^{2,2}(\mathbb T^2)=L^2(\mathbb T^2)
\]
continuously. Hence $v_j\in C([0,T),L^{2,q})$ implies $v_j\in C([0,T),L^2)$, and therefore
\[
w:=v_1-v_2\in C([0,T),L^2(\mathbb T^2)).
\]

\textbf{Step 2.}
Define
\[
T^*:=\sup\{S\in[0,T): w(t)=0\ \text{in }L^2 \text{ for all }t\in[0,S)\}.
\]
If $T^*=T$ we are done. Assume $T^*<T$. Since $w\in C([0,T),L^2)$ and $w(t)=0$ for $t<T^*$, we have
\begin{equation}\label{eq:wTstar}
w(T^*)=0\quad\text{in }L^2(\mathbb T^2).
\end{equation}

\textbf{Step 3.}
By the time-restart property of mild solutions, for each $j=1,2$ and all $t\in[T^*,T)$,
\begin{equation}\label{eq:mild-restart}
v_j(t)=e^{(t-T^*)\Delta}v_j(T^*)
-\int_{T^*}^{t} K_{\mathrm{per}}(t-s)*\bigl(v_j(s)\otimes v_j(s)\bigr)\,ds,
\end{equation}
where $K_{\mathrm{per}}$ is the periodic Oseen-type kernel associated to the Leray projector. Subtracting the identities \eqref{eq:mild-restart} for $j=1$ and $j=2$,
the linear terms cancel because of \eqref{eq:wTstar}, hence
\begin{equation}\label{eq:w-duhamel}
w(t)=-\int_{T^*}^{t} K_{\mathrm{per}}(t-s)*
\Bigl(v_1(s)\otimes v_1(s)-v_2(s)\otimes v_2(s)\Bigr)\,ds.
\end{equation}
Using the algebraic identity
\[
v_1\otimes v_1-v_2\otimes v_2=(v_1-v_2)\otimes v_1+v_2\otimes(v_1-v_2)=w\otimes v_1+v_2\otimes w,
\]
we rewrite \eqref{eq:w-duhamel} as
\begin{equation}\label{eq:w-duhamel2}
w(t)=-\int_{T^*}^{t} K_{\mathrm{per}}(t-s)*
\Bigl(w(s)\otimes v_1(s)+v_2(s)\otimes w(s)\Bigr)\,ds.
\end{equation}

\textbf{Step 4.}
We use the kernel estimate
\begin{equation}\label{eq:K-L1}
\|K_{\mathrm{per}}(t)\|_{L^1(\mathbb T^2)}\le C\,t^{-1/2},\qquad t>0.
\end{equation}
Fix $t\in[T^*,T)$. Taking $L^2$ norms in \eqref{eq:w-duhamel2} and using the triangle inequality gives
\begin{align}
\|w(t)\|_{2}
&\le \int_{T^*}^{t}\bigl\|K_{\mathrm{per}}(t-s)*\bigl(w(s)\otimes v_1(s)\bigr)\bigr\|_{2}\,ds
\nonumber\\
&\quad +\int_{T^*}^{t}\bigl\|K_{\mathrm{per}}(t-s)*\bigl(v_2(s)\otimes w(s)\bigr)\bigr\|_{2}\,ds.
\label{eq:basic-tri}
\end{align}
By Young's inequality $L^1*L^2\to L^2$ and \eqref{eq:K-L1},
\[
\|K_{\mathrm{per}}(t-s)*F\|_2\le \|K_{\mathrm{per}}(t-s)\|_1\|F\|_2\le C(t-s)^{-1/2}\|F\|_2,
\]
hence from \eqref{eq:basic-tri}
\begin{align}
\|w(t)\|_2
&\le C\int_{T^*}^{t}(t-s)^{-1/2}\,\|w(s)\otimes v_1(s)\|_2\,ds
+ C\int_{T^*}^{t}(t-s)^{-1/2}\,\|v_2(s)\otimes w(s)\|_2\,ds.
\label{eq:after-young}
\end{align}
Using the product estimate $L^\infty\times L^2\subset L^2$,
\[
\|w(s)\otimes v_1(s)\|_2\le \|v_1(s)\|_\infty\|w(s)\|_2,\qquad
\|v_2(s)\otimes w(s)\|_2\le \|v_2(s)\|_\infty\|w(s)\|_2,
\]
we obtain
\begin{equation}\label{eq:Volterra}
\|w(t)\|_2\le C\int_{T^*}^{t}(t-s)^{-1/2}\bigl(\|v_1(s)\|_\infty+\|v_2(s)\|_\infty\bigr)\|w(s)\|_2\,ds.
\end{equation}

\textbf{Step 5.}
Fix $\delta\in(0,\min\{1,T-T^*\})$ and restrict to $t\in[T^*,T^*+\delta]$. Define
\begin{equation}\label{eq:Mdelta}
M(\delta):=\sup_{s\in(T^*,T^*+\delta]}\sqrt{s-T^*}\,\bigl(\|v_1(s)\|_\infty+\|v_2(s)\|_\infty\bigr).
\end{equation}
Then for $s\in(T^*,T^*+\delta]$,
\begin{equation}\label{eq:bound-vinf}
\|v_1(s)\|_\infty+\|v_2(s)\|_\infty\le \frac{M(\delta)}{\sqrt{s-T^*}}.
\end{equation}
By the assumption \eqref{eq:S} applied at $T_0=T^*$ for each $v_j$, we have
\begin{equation}\label{eq:Mto0}
\lim_{\delta\downarrow 0} M(\delta)=0.
\end{equation}
Next, set
\[
\|w\|_{L^2_\delta}:=\sup_{t\in[T^*,T^*+\delta]}\|w(t)\|_2.
\]
Using \eqref{eq:Volterra}, \eqref{eq:bound-vinf}, and the definition of $\|w\|_{L^2_\delta}$ yields, for $t\in[T^*,T^*+\delta]$,
\begin{equation}\label{eq:w-est-preBeta}
\|w(t)\|_2
\le C\,M(\delta)\,\|w\|_{L^2_\delta}\int_{T^*}^{t}(t-s)^{-1/2}(s-T^*)^{-1/2}\,ds.
\end{equation}
Compute the integral explicitly. Let $a:=t-T^*>0$ and set $s=T^*+a\theta$, $ds=a\,d\theta$, $\theta\in[0,1]$:
\begin{align*}
\int_{T^*}^{t}(t-s)^{-1/2}(s-T^*)^{-1/2}\,ds
&=\int_0^1 (a(1-\theta))^{-1/2}(a\theta)^{-1/2}\,a\,d\theta\\
&=\int_0^1 \theta^{-1/2}(1-\theta)^{-1/2}\,d\theta\\
&=\mathrm{B}\Bigl(\frac12,\frac12\Bigr)=\pi.
\end{align*}
Hence \eqref{eq:w-est-preBeta} becomes
\[
\|w(t)\|_2\le C\pi\,M(\delta)\,\|w\|_{L^2_\delta}\qquad\forall t\in[T^*,T^*+\delta].
\]
Taking the supremum over $t\in[T^*,T^*+\delta]$ yields
\begin{equation}\label{eq:contraction}
\|w\|_{L^2_\delta}\le C\pi\,M(\delta)\,\|w\|_{L^2_\delta}.
\end{equation}

\textbf{Step 6.}
By \eqref{eq:Mto0}, choose $\delta>0$ so small that $C\pi\,M(\delta)<1$. Then \eqref{eq:contraction} implies
$\|w\|_{L^2_\delta}=0$, i.e.\ $w(t)=0$ in $L^2$ for all $t\in[T^*,T^*+\delta]$.
This contradicts the definition of $T^*$ unless $T^*=T$. Therefore $T^*=T$ and $v_1\equiv v_2$ on $[0,T)$.
\end{proof}

\section{Applications: consequences of the main result.}
\label{sec:cor}

In this section we record two corollaries of Theorem~\ref{main}.
The first one is a ``bridge'' statement connecting Lorentz-data solutions to
Kato/Koch--Tataru type path spaces. The second one is a quantitative
short-time stability estimate in $L^2$ (in particular, a Lipschitz-type
dependence on restart data).

\subsection{Uniqueness among Koch--Tataru/Kato-type solutions}

We begin by isolating the analytic feature of Koch--Tataru's framework
that is relevant for our uniqueness argument. In \cite{KochTataru2001},
Koch and Tataru construct mild solutions in $\R^d$ for small data in
$BMO^{-1}$ in a solution space that, in particular, controls the time-weighted
$L^\infty$ norm $\sup_{t>0}\sqrt{t}\|u(t)\|_\infty$ (together with a tent-space
Carleson-type norm). A key point is that their solution space is
time-translation invariant, so the same type of control is available after
restarting at any time $T_0$. (See also \cite{AuscherFrey1310}
for an operator-theoretic proof of Koch--Tataru's result.)

On the torus $\T^2$ one can formulate an analogous ``Kato/Koch--Tataru type''
path-space hypothesis that is precisely designed to imply the short-time
smoothing condition \eqref{eq:S}. For our purposes it is enough to work with
the following abstract definition.

\begin{definition}[A restart-smoothing path class]\label{def:X0}
Let $T\in(0,\infty]$. We say that a mild solution $v$ on $[0,T)\times\T^2$
belongs to the class $\mathcal{X}_0([0,T))$ if for every restart time $T_0\in[0,T)$,
\begin{equation}\label{eq:X0-def}
\lim_{\delta\downarrow 0}\ \sup_{t\in(T_0,T_0+\delta]}\sqrt{t-T_0}\,\|v(t)\|_{L^\infty(\T^2)}=0.
\end{equation}
\end{definition}

Of course, Definition~\ref{def:X0} is identical to \eqref{eq:S} (and is therefore
tailored to ~\ref{main}). The point is that many
standard critical mild-solution constructions satisfy \eqref{eq:X0-def}:
for instance, in the Koch--Tataru framework, the ``vanishing'' version of the
critical theory (often associated with $VMO^{-1}$ data) yields the smallness
$\lim_{t\downarrow 0}\sqrt{t}\|u(t)\|_\infty=0$ as part of the topology of the solution
space, and time-translation invariance then yields the restarted version at all $T_0$
(see \cite{KochTataru2001} and the discussion around the vanishing subspace in later
expositions).

\begin{corollary}[Uniqueness in $C_tL_x^{2,q}$ within $\mathcal{X}_0$]\label{cor:A}
Let $T\in(0,\infty]$ and let $1<q<2$. Let $v_1,v_2$ be mild solutions on $[0,T)\times\T^2$
such that
\[
v_1,v_2\in C([0,T),L^{2,q}(\T^2))\cap \mathcal{X}_0([0,T)).
\]
If $v_1(0)=v_2(0)$ in $L^{2,q}(\T^2)$, then $v_1\equiv v_2$ on $[0,T)$.
\end{corollary}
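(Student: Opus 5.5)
Corollary~\ref{cor:A} follows directly from Theorem~\ref{main}, so the plan is to check its hypotheses one at a time. First, $v_1,v_2$ are mild solutions on $[0,T)\times\T^2$ with $v_1(0)=v_2(0)\in L^{2,q}(\T^2)$ and $1<q<2$; this is the same setting as the theorem. Since $v_j\in C([0,T),L^{2,q}(\T^2))$ by assumption, hypothesis~1 of Theorem~\ref{main} holds. Lemma~\ref{lem:L2q-into-L2} then gives $v_j\in C([0,T),L^2)$. This is consistent with the definition of mild solution, which is phrased in $C([0,T);L^2)$.

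Next, membership $v_j\in\mathcal{X}_0([0,T))$ means by Definition~\ref{def:X0} that \eqref{eq:X0-def} holds for every $T_0\in[0,T)$. That condition is literally \eqref{eq:S} for $v=v_j$, so hypothesis~2 of Theorem~\ref{main} is satisfied for $j=1,2$. Applying Theorem~\ref{main} then yields $v_1\equiv v_2$ on $[0,T)$.

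There is no genuine obstacle here. The only point needing care is that the smoothing condition is required at \emph{every} restart time $T_0$, not just at $T_0=0$. This is exactly what the proof of Theorem~\ref{main} uses at the maximal coincidence time $T^*$, and it is encoded in $\mathcal{X}_0$ by definition. If one instead started from a Koch--Tataru-type bound only at $T_0=0$, the main work would be to derive the restarted vanishing bound. One would do this by time-translation invariance of the solution class, using continuity of $v$ in $L^2$ at $T_0$ together with density of smooth data in the vanishing subspace. Since the corollary assumes the restarted version directly, this step is not needed.
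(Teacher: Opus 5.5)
Your proposal is correct and follows the paper's proof exactly: you use the embedding $L^{2,q}\hookrightarrow L^2$ to place $v_j$ in $C([0,T),L^2)$, observe that membership in $\mathcal{X}_0([0,T))$ is verbatim condition \eqref{eq:S} at every restart time, and then invoke Theorem~\ref{main}. Your closing remark about deriving the restarted bound from a Koch--Tataru estimate at $T_0=0$ alone is a harmless aside that the corollary does not need.
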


\begin{proof}
Since $q<2$, we have the continuous embedding $L^{2,q}(\T^2)\hookrightarrow L^2(\T^2)$,
hence $v_j\in C([0,T),L^2)$ and the difference $w=v_1-v_2$ belongs to $C([0,T),L^2)$.
Moreover, $v_1,v_2\in \mathcal{X}_0([0,T))$ means that each $v_j$ satisfies the short-time
restart smoothing condition \eqref{eq:S} (with $T_0$ arbitrary). Therefore, all hypotheses
of Theorem~\ref{main} are satisfied, and we conclude $v_1\equiv v_2$.
\end{proof}

\begin{remark}[How this interfaces with Koch--Tataru]\label{rem:KT-interface}
Corollary~\ref{cor:A} can be viewed as a bridge statement:
if a mild solution belongs to a Koch--Tataru/Kato-type path space that implies
the restart-vanishing estimate \eqref{eq:X0-def} (for instance, the vanishing
subclass of the Koch--Tataru space associated with $VMO^{-1}$ data),
then it is unique among all mild solutions in $C([0,T),L^{2,q})$ that satisfy the same
restart smoothing. In this sense, short-time parabolic smoothing replaces the Lorentz
endpoint structure ($L^{2,1}$ vs.\ $L^{2,\infty}$) used in the classical Lemari\'e--Rieusset
argument.
\end{remark}

\subsection{Short-time stability in $L^2$}

We now record a quantitative estimate that is implicit in the contraction proof.
It yields a Lipschitz-type dependence of the solution on restart data in the $L^2$ topology
over short intervals, with an explicit stability factor.

\begin{corollary}[Short-time $L^2$ stability under restart smoothing]\label{cor:B}
Let $T\in(0,\infty]$ and let $1<q<2$.
Let $v_1,v_2$ be mild solutions on $[0,T)\times\T^2$ satisfying
\[
v_1,v_2\in C([0,T),L^{2,q}(\T^2)),
\]
and assume that for some fixed restart time $T_0\in[0,T)$ the following smallness holds:
\begin{equation}\label{eq:Md-small}
M_{T_0}(\delta):=\sup_{t\in(T_0,T_0+\delta]}
\sqrt{t-T_0}\,\bigl(\|v_1(t)\|_{L^\infty}+\|v_2(t)\|_{L^\infty}\bigr)
\ \le\ m,
\end{equation}
for some $\delta\in(0,\min\{1,T-T_0\})$ and some $m\ge 0$.

Then the difference $w=v_1-v_2$ satisfies, on $[T_0,T_0+\delta]$,
\begin{equation}\label{eq:stability-est}
\sup_{t\in[T_0,T_0+\delta]}\|w(t)\|_{L^2}
\ \le\
\frac{\|w(T_0)\|_{L^2}}{1-\kappa},
\qquad
\kappa:=C\pi\, m,
\end{equation}
where $C$ is the constant from the kernel bound $\|K_{\mathrm{per}}(t)\|_{L^1}\le C t^{-1/2}$.
In particular, if $\kappa<1$ then $v_1\equiv v_2$ on $[T_0,T_0+\delta]$ whenever $w(T_0)=0$.
\end{corollary}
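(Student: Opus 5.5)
The plan is to repeat Steps 3--5 of the proof of Theorem~\ref{main}, restarting at the fixed time $T_0$ instead of at $T^*$. The one change is that the linear term no longer cancels; we keep it and bound it by the $L^2$-contractivity of the heat semigroup. First, by Lemma~\ref{lem:L2q-into-L2}, $v_1,v_2\in C([0,T),L^2)$, so $w=v_1-v_2\in C([0,T),L^2)$. Next, write the restart formulation \eqref{eq:mild-restart-kernel} at time $T_0$ for each $v_j$ and subtract. Using the algebraic identity $v_1\otimes v_1-v_2\otimes v_2=w\otimes v_1+v_2\otimes w$, we get for $t\in[T_0,T_0+\delta]$
\[
w(t)=e^{(t-T_0)\Delta}w(T_0)-\int_{T_0}^{t}K_{\mathrm{per}}(t-s)*\bigl(w(s)\otimes v_1(s)+v_2(s)\otimes w(s)\bigr)\,\dd s .
\]

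For the estimates, the linear term satisfies $\|e^{(t-T_0)\Delta}w(T_0)\|_2\le\|w(T_0)\|_2$, since $e^{t\Delta}$ is a contraction on $L^2$. The Duhamel term is treated exactly as in Step 4 of Theorem~\ref{main}: Young's inequality \eqref{eq:Young-L1-L2} with the kernel bound \eqref{eq:Kper-L1}, followed by Lemma~\ref{lem:Linfty-L2}. Then the hypothesis \eqref{eq:Md-small} gives $\|v_1(s)\|_\infty+\|v_2(s)\|_\infty\le m\,(s-T_0)^{-1/2}$ for $s\in(T_0,T_0+\delta]$. Setting $W:=\sup_{t\in[T_0,T_0+\delta]}\|w(t)\|_2$, we obtain
\[
\|w(t)\|_2\le \|w(T_0)\|_2+C\,m\,W\int_{T_0}^{t}(t-s)^{-1/2}(s-T_0)^{-1/2}\,\dd s=\|w(T_0)\|_2+C\pi m\,W .
\]
Here the Beta-function identity $\mathrm{B}(\tfrac12,\tfrac12)=\pi$ is used exactly as in Step 5. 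Taking the supremum over $t$ gives $W\le\|w(T_0)\|_2+\kappa W$.

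The step needing care is the absorption of $\kappa W$ into the left-hand side. This requires $W<\infty$, which holds because $w$ is continuous into $L^2$ on the compact interval $[T_0,T_0+\delta]\subset[0,T)$; recall $\delta<T-T_0$. It also requires $\kappa<1$. Under $\kappa<1$ we rearrange to get $W\le\|w(T_0)\|_2/(1-\kappa)$, which is \eqref{eq:stability-est}. Since the right-hand side is only meaningful (positive) when $\kappa<1$, I would state explicitly that the estimate is asserted under this condition. The final claim then follows at once: if $w(T_0)=0$ and $\kappa<1$, then $W=0$.

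I should also check that the integrals are finite. The bound on $\|v_j(s)\|_\infty$ may blow up at $s=T_0$, but it is integrable against $(t-s)^{-1/2}$, since the Beta integral converges. So every quantity in the chain is finite, and no additional argument is needed beyond what was used in the proof of Theorem~\ref{main}.
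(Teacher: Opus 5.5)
Your proposal is correct and follows the paper's proof step by step. You restart at $T_0$, keep the linear term and bound it by the $L^2$-contractivity of $e^{t\Delta}$, apply Young's inequality with $\|K_{\mathrm{per}}(t)\|_{L^1}\le Ct^{-1/2}$ and the $L^\infty\times L^2$ product bound, evaluate $\mathrm{B}(\tfrac12,\tfrac12)=\pi$, and absorb $\kappa W$ when $\kappa<1$. Your two additional remarks are also correct: the absorption needs $W<\infty$, which follows from continuity of $w$ into $L^2$ on the compact interval $[T_0,T_0+\delta]\subset[0,T)$, and the estimate is only meaningful when $\kappa<1$. The paper leaves the first implicit and mentions the second only in passing.
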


\begin{proof}
\textbf{Step 1.}
As in the proof of Theorem~\ref{main}, since $q<2$ we have
$L^{2,q}\hookrightarrow L^2$ and thus $v_j\in C([0,T),L^2)$, so $w:=v_1-v_2\in C([0,T),L^2)$.
Restart the mild formulation at time $T_0$:
\[
v_j(t)=e^{(t-T_0)\Delta}v_j(T_0)
-\int_{T_0}^{t} K_{\mathrm{per}}(t-s)*\bigl(v_j(s)\otimes v_j(s)\bigr)\,ds,
\qquad t\in[T_0,T).
\]
Subtracting yields
\begin{equation}\label{eq:w-restart-T0}
w(t)=e^{(t-T_0)\Delta}w(T_0)
-\int_{T_0}^{t} K_{\mathrm{per}}(t-s)*
\Bigl(w(s)\otimes v_1(s)+v_2(s)\otimes w(s)\Bigr)\,ds.
\end{equation}

\textbf{Step 2.}
Take $L^2$ norms in \eqref{eq:w-restart-T0}. Using that the heat semigroup is a contraction on $L^2$,
\[
\|e^{(t-T_0)\Delta}w(T_0)\|_2\le \|w(T_0)\|_2,
\]
and using Young's inequality $L^1*L^2\to L^2$ together with $\|K_{\mathrm{per}}(t)\|_{1}\le C t^{-1/2}$,
we obtain, for $t\in[T_0,T_0+\delta]$,
\begin{align}
\|w(t)\|_2
&\le \|w(T_0)\|_2
+ C\int_{T_0}^{t}(t-s)^{-1/2}\,\|w(s)\otimes v_1(s)\|_2\,ds
+ C\int_{T_0}^{t}(t-s)^{-1/2}\,\|v_2(s)\otimes w(s)\|_2\,ds.
\label{eq:w-stab-1}
\end{align}
Using $\|fg\|_2\le \|f\|_\infty\|g\|_2$,
\[
\|w(s)\otimes v_1(s)\|_2\le \|v_1(s)\|_\infty\|w(s)\|_2,\qquad
\|v_2(s)\otimes w(s)\|_2\le \|v_2(s)\|_\infty\|w(s)\|_2,
\]
so \eqref{eq:w-stab-1} becomes
\begin{equation}\label{eq:w-stab-2}
\|w(t)\|_2
\le \|w(T_0)\|_2
+ C\int_{T_0}^{t}(t-s)^{-1/2}\bigl(\|v_1(s)\|_\infty+\|v_2(s)\|_\infty\bigr)\|w(s)\|_2\,ds.
\end{equation}

\textbf{Step 3.}
From \eqref{eq:Md-small}, for $s\in(T_0,T_0+\delta]$ we have
\[
\|v_1(s)\|_\infty+\|v_2(s)\|_\infty
\le \frac{m}{\sqrt{s-T_0}}.
\]
Define
\[
\|w\|_{L^2_\delta}:=\sup_{t\in[T_0,T_0+\delta]}\|w(t)\|_2.
\]
Then for $t\in[T_0,T_0+\delta]$, \eqref{eq:w-stab-2} implies
\begin{equation}\label{eq:w-stab-3}
\|w(t)\|_2
\le \|w(T_0)\|_2
+ C m\,\|w\|_{L^2_\delta}\int_{T_0}^{t}(t-s)^{-1/2}(s-T_0)^{-1/2}\,ds.
\end{equation}
Compute the time integral exactly: with $a=t-T_0>0$ and $s=T_0+a\theta$,
\[
\int_{T_0}^{t}(t-s)^{-1/2}(s-T_0)^{-1/2}\,ds
=\int_0^1 \theta^{-1/2}(1-\theta)^{-1/2}\,d\theta
=\mathrm{B}\Bigl(\frac12,\frac12\Bigr)=\pi.
\]
Thus \eqref{eq:w-stab-3} yields
\[
\|w(t)\|_2\le \|w(T_0)\|_2 + C\pi m\,\|w\|_{L^2_\delta},
\qquad t\in[T_0,T_0+\delta].
\]
Taking the supremum over $t\in[T_0,T_0+\delta]$ gives
\[
\|w\|_{L^2_\delta}\le \|w(T_0)\|_2 + C\pi m\,\|w\|_{L^2_\delta}.
\]
Rearranging,
\[
(1-\kappa)\|w\|_{L^2_\delta}\le \|w(T_0)\|_2,\qquad \kappa:=C\pi m.
\]
If $\kappa<1$ this implies \eqref{eq:stability-est}. Finally, if $w(T_0)=0$ then
$\|w\|_{L^2_\delta}=0$, hence $v_1\equiv v_2$ on $[T_0,T_0+\delta]$.
\end{proof}

\begin{remark}
If $v_1,v_2$ satisfy the restart-vanishing smoothing condition \eqref{eq:S} at time $T_0$,
then $M_{T_0}(\delta)\to 0$ as $\delta\downarrow 0$. In particular, one may choose $\delta$
so that $\kappa<1$ in \eqref{eq:stability-est}. Setting $T_0=T^*$ (the maximal coincidence time)
and $w(T^*)=0$, Corollary~\ref{cor:B} reproduces the key contraction step in
Theorem~\ref{main}.
\end{remark}

\section{Conflict of Interest Statement}
The author has no conflicts of interest to declare.

\section{Declaration of generative AI and AI-assisted technologies in the manuscript preparation process}
The author did not utilize generative AI and AI-assisted technologies in the manuscript preparation process.

\noindent\textsc{"Simion Stoilow" Institute of Mathematics of the Romanian Academy, Calea Grivitei Street, no. 21, 010702 Bucharest, Romania.}
\newline\noindent\textit{Email address}: \href{mailto:sasharadu@icloud.com}{sasharadu@icloud.com}.

\end{document}